\documentclass[12pt]{amsart}
\usepackage{amssymb,verbatim,enumerate,ifthen}
\usepackage[mathscr]{eucal}
\usepackage[utf8]{inputenc}
\usepackage[T1]{fontenc}
\usepackage{cite}
\textwidth 13.5cm

\makeatletter
\@namedef{subjclassname@2010}{%
  \textup{2010} Mathematics Subject Classification}
\makeatother

\newtheorem{thm}{Theorem}

\newtheorem{prop}{Proposition}
\newtheorem{lem}{Lemma}
\newtheorem{rem}{Remark}

\newtheorem*{xrem}{Remark}
\numberwithin{equation}{section}

\newcommand\R{\mathbb{R}}
\newcommand\N{\mathbb{N}}
\newcommand\Q{\mathbb{Q}}
\newcommand\Z{\mathbb{Z}}
\newcommand{\norm}[1]{\left\| #1 \right\| }
\newcommand{\abs}[1]{\left| #1 \right| }
\newcommand\Lo{\mathscr{L}}
\newcommand\Tr{\mathscr{T}}
\newcommand\Up{\mathscr{U}}
\newcommand\Bo{\mathscr{B}}

\numberwithin{equation}{section}
\def\eq#1{{\rm(\ref{#1})}}
\def\Eq#1#2{\ifthenelse{\equal{#1}{*}}
  {\begin{equation*}\begin{aligned}[]#2\end{aligned}\end{equation*}}
  {\begin{equation}\begin{aligned}[]\label{#1}#2\end{aligned}\end{equation}}}
\subjclass[2010]{03D60, 03E10, 26E60, 26D15}
\keywords{Invariant mean, noncontinuous mean, transfinite induction, Gaussian product, mean-type mapping}
  
\author[P. Pasteczka]{Pawe\l{} Pasteczka}
\address{Institute of Mathematics \\ Pedagogical University of Cracow \\ Pod\-cho\-r\k{a}\-\.zych str. 2, 30-084 Krak\'ow, Poland}
\email{pawel.pasteczka@up.krakow.pl}

\title[Discontinuous mean-type mappings]{Invariant property for discontinuous mean-type mappings}

\begin{document}
\maketitle
\begin{abstract}
It is known that if $M,\,N$ are continuous two-variable means such that
$\abs{M(x,y)-N(x,y)} < \abs{x-y}$ for every $x,\ y$ with $x\ne y$,
then there exists a unique invariant mean (which is continuous too).

We are looking for invariant means for pairs satisfying the inequality above, but continuity of means is not assumed. 

In this setting the invariant mean is no longer uniquely defined, but we prove that there exist the smallest and the biggest one. Furthermore it is shown that there exists at most one continuous invariant mean related to each pair.
\end{abstract}

\section{Introduction}
The idea of invariant means was first introduced by Gauss \cite{Gau18} who considered so-called arithmetic-geometric means.
It was obtained as a limit in the iteration process 
\Eq{*}{
x_{n+1}=\frac{x_n+y_n}{2},\qquad y_{n+1}=\sqrt{x_ny_n} \qquad (n \in \N_+\cup\{0\}),
}
where $x_0,\,y_0$ are two positive arguments. Then it is known that both $(x_n)$ and $(y_n)$ are convergent to a common limit which called the \emph{arithmetic-geometric mean} (of the initial arguments $x_0:=x$ and $y_0:=y$). 

In a more general setting a \emph{mean} is an arbitrary function $M \colon I^2 \to I$ (from now on $I$ stands for an arbitrary interval) such that 
\Eq{*}{
\min(x,y) \le M(x,y) \le \max(x,y)\quad\text{ for all }\quad x,\,y \in I.
}
If inequalities above remains strict unless $x=y$, then the mean $M$ itself is called \emph{strict}. 

For two means $M,\,N$ on $I$ we define a selfmapping $(M,\,N) \colon I^2 \to I^2$ by 
$(M,\,N)(x,y):=(M(x,y),N(x,y))$. We call a mean $K$ on $I$ to be an \emph{$(M,\,N)$-invariant mean} if $K=K \circ (M,N)$; more precisely
\Eq{*}{
K(x,y)=K\big(M(x,y),N(x,y)\big)\quad\text{ for all }\quad x,\,y \in I.
}

In this setting the arithmetic-geometric mean is an invariant mean for arithmetic and geometric mean. In fact it was proved \cite[Theorem~8.2]{BorBor87} that if $M$ and $N$ are continuous and strict then such $K$ always exists and is uniquely determined. Later Matkowski \cite{Mat99b} proved that the strictness assumption can be relax to
\Eq{weakIn}{
\abs{M(x,y)-N(x,y)}< \abs{x-y} \text{ for all }x,\,y \in I,\:x \ne y.
}

Finally, similarly like in the case of arithmetic-geometric mean we know (see e.g. \cite{BorBor87}) that the $(M,\,N)$-invariant mean is obtained as a common limit of iterates of the mean-type mapping $(M,\,N)$ given by
\Eq{E:xnyn}{
x_0&=x, &\qquad y_0&=y;\\
x_{n+1}&=M(x_n,y_n), &\qquad y_{n+1}&=N(x_n,y_n) \qquad\text{ for all }n \ge0;
}
where $x$ and $y$ are its arguments. In fact these sequences of iterates are used so often that whenever the quadruple $(M,N,x,y)$ is defined, sequences $(x_n)$ and $(y_n)$ are also given.

Invariant means were extensively studied during recent years, see for example papers by Baj\'ak--P\'ales \cite{BajPal09b,BajPal09a,BajPal10,BajPal13}, by Dar\'oczy--P\'ales \cite{Dar05a,DarPal02c,DarPal03a}, by G{\l}azowska \cite{Gla11b,Gla11a}, by Matkowski \cite{Mat99b,Mat02b,Mat05,Mat06b,Mat13}, by Matkowski--P\'ales \cite{MatPal15}, by the author \cite{Pas15c,Pas16a,Pas1801}, and in the seminal book Borwein-Borwein \cite{BorBor87}.

We will consider $(M,\,N)$-invariant means where $M,\,N$ satisfies inequality \eq{weakIn} but continuity is replaced by symmetry (i.e. $M(x,y)=M(y,x)$ for all $x,\,y \in I$). 

Let us just mention that we do not require the means to be discontinuous. On the other hand if both of them are continuous then our consideration reduces to the one which was already done many times (see references above).

\section{Invariant means with no continuity assumption}

In this section we are going to present some examples of constructions which provide $(M,\,N)$-invariant means, where $M$ and $N$ are not necessarily continuous. 

There are two somehow independent ways of defining such means. First idea is to extend the meaning of limit which appear in the definition of invariant mean (for example to $\liminf$ or $\limsup$).  We realize this idea in section~\ref{sec:BIM}. Second one is related with transfinite iterations (section~\ref{sec:TIM}).

Let us begin with two elementary, however useful, results
\begin{lem}
 If $M,N \colon I^2 \to I$ are symmetric means then every $(M,\,N)$-invariant mean is symmetric.
\end{lem}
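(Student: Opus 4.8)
The plan is to exploit the fact that symmetry of $M$ and $N$ forces the mean-type mapping $(M,N)$ itself to be symmetric, and then to feed this into the invariance equation. Fix an $(M,\,N)$-invariant mean $K$ and arbitrary arguments $x,\,y \in I$; the goal is to establish $K(x,y)=K(y,x)$.

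First I would record the consequence of the hypothesis on $M$ and $N$: since $M(x,y)=M(y,x)$ and $N(x,y)=N(y,x)$, the swapped pair $(y,x)$ and the original pair $(x,y)$ produce the identical image under $(M,N)$, that is
\Eq{*}{
\big(M(y,x),N(y,x)\big)=\big(M(x,y),N(x,y)\big).
}
Next I would apply the defining invariance identity $K=K\circ(M,N)$ twice — once at $(x,y)$ and once at $(y,x)$ — obtaining $K(x,y)=K\big(M(x,y),N(x,y)\big)$ and $K(y,x)=K\big(M(y,x),N(y,x)\big)$.

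Finally I would substitute the displayed equality of images into the second identity, which collapses the right-hand side of the $(y,x)$ equation onto the right-hand side of the $(x,y)$ equation, yielding $K(y,x)=K\big(M(x,y),N(x,y)\big)=K(x,y)$. Since $x,\,y$ were arbitrary, $K$ is symmetric.

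There is no real obstacle here: the entire content is the observation that invariance lets one replace the pair $(y,x)$ by its image, and that this image is insensitive to the order of arguments precisely because $M$ and $N$ are symmetric. No continuity, strictness, or the inequality \eq{weakIn} is needed, so the proof is a direct two-step application of the invariance equation.
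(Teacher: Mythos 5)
Your argument is correct and coincides with the paper's own proof: both apply the invariance identity at $(x,y)$ and at $(y,x)$ and use the symmetry of $M$ and $N$ to identify the two images, yielding the chain $K(x,y)=K(M(x,y),N(x,y))=K(M(y,x),N(y,x))=K(y,x)$. Nothing further is needed.
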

Indeed, if $K$ is an arbitrary $(M,\,N)$-invariant mean then for every $x,\,y \in I$ we get
 \Eq{*}{
 K(x,y)=K(M(x,y),N(x,y))=K(M(y,x),N(y,x))=K(y,x).
 }
\begin{lem}
\label{lem:symme}
 If $M,N \colon I^2 \to I$ are symmetric means then a mean is $(M,\,N)$-invariant if and only if it is $(M\wedge N,M \vee N)$-invariant, where 
 \Eq{*}{
 (M\wedge N)(x,y):=\min(M(x,y),N(x,y)),\quad x,\,y \in I,\\
 (M\vee N)(x,y):=\max(M(x,y),N(x,y)),\quad x,\,y \in I.
 } 
\end{lem}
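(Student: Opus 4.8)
The plan is to prove the equivalence by showing that the two mean-type mappings $(M,N)$ and $(M\wedge N, M\vee N)$ generate essentially the same orbit structure, so that a mean invariant under one is automatically invariant under the other. The crucial observation is that, because $M$ and $N$ are symmetric, for each fixed pair $(x,y)$ the \emph{unordered} pair $\{M(x,y),N(x,y)\}$ coincides with $\{(M\wedge N)(x,y),(M\vee N)(x,y)\}$; indeed the min and the max are just the two values $M(x,y),N(x,y)$ written in a prescribed order. Thus the single application of $(M\wedge N, M\vee N)$ to $(x,y)$ produces exactly the point $(M(x,y),N(x,y))$ or its coordinate-swap.

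First I would record the pointwise identity that, for every $x,y\in I$,
\Eq{*}{
\{(M\wedge N)(x,y),\,(M\vee N)(x,y)\}=\{M(x,y),\,N(x,y)\},
}
and hence either $(M\wedge N, M\vee N)(x,y)=(M,N)(x,y)$ or $(M\wedge N, M\vee N)(x,y)=\big(N(x,y),M(x,y)\big)$, the swap of the coordinates. Next I would invoke the first lemma of the excerpt: a mean $K$ that is $(M,N)$-invariant (or $(M\wedge N,M\vee N)$-invariant) is symmetric, so $K(a,b)=K(b,a)$ for all $a,b\in I$. Symmetry of $K$ is what lets me absorb the possible coordinate swap harmlessly.

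The two implications then run in parallel. For the forward direction, suppose $K$ is $(M,N)$-invariant; then $K$ is symmetric, and for any $(x,y)$ I compute
\Eq{*}{
K\big((M\wedge N)(x,y),(M\vee N)(x,y)\big)=K\big(M(x,y),N(x,y)\big)=K(x,y),
}
where the first equality uses the unordered-pair identity together with the symmetry of $K$ to discard the swap, and the second is $(M,N)$-invariance. The converse is identical with the roles reversed, using that a $(M\wedge N,M\vee N)$-invariant mean is also symmetric. Combining the two gives the asserted equivalence.

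I do not anticipate a genuine obstacle here, since the argument is essentially a bookkeeping observation; the only point requiring care is justifying that $K$ is symmetric \emph{in both directions of the equivalence}, which is exactly why the preceding lemma must be applied to $K$ regardless of which invariance we start from. A minor subtlety is that the coordinate swap may occur on a different set of pairs $(x,y)$ than one might naively expect, but since symmetry of $K$ holds for \emph{all} arguments, no case distinction on where the swap happens is actually needed.
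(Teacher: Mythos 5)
Your proof is correct and follows essentially the same route as the paper: the author likewise notes that the unordered pair $\{M(x,y),N(x,y)\}$ equals $\{(M\wedge N)(x,y),(M\vee N)(x,y)\}$, invokes the preceding lemma to get symmetry of $K$ in both directions of the equivalence (observing that $M\wedge N$ and $M\vee N$ are themselves symmetric means), and concludes from $K\circ(M,N)=K\circ(M\wedge N,M\vee N)$ for symmetric $K$. No substantive difference.
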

By the previous lemma every $(M,\,N)$-invariant (or $(M\wedge N,M \vee N)$-invariant) mean is symmetric. Furthermore for every symmetric function $K \colon I^2 \to I$ we have $K\circ(M,N)=K \circ (M\wedge N,M \vee N)$.

\subsection{\label{sec:BIM} Boundary invariant means} 
This idea is motivated by generalized limit function. Our consideration covers all standard type of limits (i.e. $\lim$, $\liminf$, $\limsup$) but also more general functionals like Banach limit\footnote{Banach limit is a linear functional $L \colon \ell^\infty \to \R$ such that $\norm{L}_{\infty}=1$; $L(a_2,a_3,\dots)=L(a)$ for every $a \in \ell^\infty$; $L(a) \ge 0$ whenever $a_n \ge 0$ for all $n$; and $L(a)=\lim_{n \to \infty} a_n$ for every convergent sequence $(a_n)$ (cf. Conway \cite{Con90}).}. A function $\phi \colon \ell^{\infty}(I) \to I$ is called 
\emph{2-limit-like} if for every $a=(a_1,a_2,\dots) \in \ell^{\infty}(I)$ 
\begin{enumerate}[(i)]
\item $\phi(a_1,a_2,a_3,\dots)=\phi(a_3,a_4,a_5,\dots)$, and
\item $\liminf_{n \to \infty} a_n \le \phi(a_1,a_2,\dots)\le \limsup_{n \to \infty} a_n$.
 \end{enumerate}
Note that whenever the sequence $a$ is convergent, then $\phi(a)= \lim_{n \to \infty} a_n$. 

Let us emphasize that 2-limit-like function are much more general objects than common (or even Banach) limits. In fact we can construct $2^{\mathfrak{c}}$ different 2-limit-like functions. Indeed, each function $w \colon [0,1] \to [0,1]$ lead to a 2-limit-like function on $\ell^{\infty}[0,1]$ given by
\Eq{*}{
\phi_w(a):= \liminf_{n \to \infty} a_{n} + w\big(\liminf_{n \to \infty} a_{2n}\big) \cdot \Big( \limsup_{n \to \infty} a_{n}- \liminf_{n \to \infty} a_{n} \Big).
}
Furthermore, by taking a family of $4$-periodic sequences $(0,x,0,1,\dots)$ for $x \in [0,1]$, it can be verified that the mapping $w \mapsto \phi_w$ is one-to-one.

At the moment we can use this definition to introduce the wide class of $(M,\,N)$-invariant means.
\begin{prop}
Let $M,\,N \colon I^2 \to I$ be two means and $\phi  \colon \ell^{\infty}(I) \to I$ be a 2-limit-like function. Then the mean $\Bo_\phi$ given by
\Eq{*}{
\Bo_\phi(x,y):=\phi(x_0,y_0,x_1,y_1,x_2,y_2,\dots)
}
is $(M,\,N)$-invariant.

Conversely, every $(M,\,N)$-invariant mean equals $\Bo_\phi$ for some 2-limit-like function $\phi$.
\end{prop}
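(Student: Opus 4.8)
The plan is to treat the two implications separately: the first is a direct verification, while the second requires a careful extension argument which is the real obstacle.

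For the forward implication I would first record that $\Bo_\phi$ is a mean. An easy induction shows that every iterate satisfies $x_n,y_n\in[\min(x,y),\max(x,y)]$, so the sequence $a=(x_0,y_0,x_1,y_1,\dots)$ has $\min(x,y)\le\liminf_n a_n$ and $\limsup_n a_n\le\max(x,y)$, and property (ii) then places $\Bo_\phi(x,y)=\phi(a)$ between $\min(x,y)$ and $\max(x,y)$. For invariance, I would observe that the iterates of the pair $(M(x,y),N(x,y))=(x_1,y_1)$ are exactly $(x_{n+1},y_{n+1})$, so $\Bo_\phi(M(x,y),N(x,y))=\phi(x_1,y_1,x_2,y_2,\dots)$. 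Writing $\sigma$ for the shift $\sigma(a_1,a_2,a_3,\dots):=(a_3,a_4,\dots)$, this equals $\phi(\sigma(a))$, and property (i) gives $\phi(\sigma(a))=\phi(a)=\Bo_\phi(x,y)$, settling the first claim.

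For the converse, let $K$ be an arbitrary $(M,N)$-invariant mean and let $T\subseteq\ell^\infty(I)$ be the set of \emph{trajectory sequences} $\mathrm{traj}(x,y):=(x_0,y_0,x_1,y_1,\dots)$. Since its first two entries recover $(x,y)$, the rule $\mathrm{traj}(x,y)\mapsto K(x,y)$ is a well-defined function on $T$, and I want $\phi$ to restrict to it. Two facts make this consistent with the axioms on $T$. First, $\sigma(\mathrm{traj}(x,y))=\mathrm{traj}(x_1,y_1)$ and invariance gives $K(x_1,y_1)=K(x,y)$, so the prospective values respect (i) and in particular $\sigma(T)\subseteq T$. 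Second, the relation $K(x,y)=K(x_n,y_n)\in[\min(x_n,y_n),\max(x_n,y_n)]$, valid for every $n$, forces $\liminf_n a_n\le K(x,y)\le\limsup_n a_n$: if $K(x,y)$ were below $\liminf_n a_n$ it would eventually lie strictly below every $\min(x_n,y_n)$, contradicting $K(x,y)\ge\min(x_n,y_n)$. Thus the candidate already obeys both axioms on $T$, and here $\liminf_n$ and $\limsup_n$ lie in the compact subinterval $[\min(x,y),\max(x,y)]\subseteq I$, so no value escapes $I$.

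The main obstacle is extending $\phi$ to all of $\ell^\infty(I)$ without breaking (i): a sequence outside $T$ may be shifted into $T$ by $\sigma$, so I cannot simply glue $K$ on $T$ to $\liminf$ on the complement. I would resolve this by enlarging $T$ to $B:=\{a\in\ell^\infty(I):\sigma^k(a)\in T\text{ for some }k\ge 0\}$. Because $\sigma(T)\subseteq T$, once $\sigma^k(a)\in T$ all later shifts remain in $T$ and, by invariance, carry a common $K$-value; defining $\phi(a)$ to be that value makes $\phi$ well-defined and shift-invariant on $B$, and it meets (ii) since shifting alters neither $\liminf$ nor $\limsup$ while they still sandwich the common value. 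On the complement I set $\phi(a):=\liminf_n a_n$, which is shift-invariant and satisfies (ii) at once. Since $a\notin B$ implies $\sigma(a)\notin B$ (otherwise $\sigma^{k+1}(a)\in T$ and $a\in B$), both $B$ and its complement are mapped into themselves by $\sigma$; hence $\phi=\phi\circ\sigma$ holds everywhere, because on $B$ both sides equal the common $K$-value and on the complement both equal the shift-invariant $\liminf$. Therefore $\phi$ is $2$-limit-like, and from $\mathrm{traj}(x,y)\in T\subseteq B$ we conclude $\Bo_\phi(x,y)=\phi(\mathrm{traj}(x,y))=K(x,y)$, as required.
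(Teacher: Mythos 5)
Your proof is correct and follows the paper's strategy in both directions: the forward implication is verified exactly as in the paper (the iterates stay in $[\min(x,y),\max(x,y)]$, so (ii) makes $\Bo_\phi$ a mean, and the two-step shift property (i) gives invariance), and the converse is built by declaring $\phi$ equal to $K$ on trajectory sequences and to $\liminf$ elsewhere. The one place where you go beyond the paper is the extension step, and your extra care is genuinely needed: the paper simply ``fulfils'' $\phi$ by $\liminf$ off the orbit set $T$ and asserts (i), but a sequence $a\notin T$ can have $\sigma(a)\in T$ (prepend two arbitrary points of $I$ to a trajectory), in which case the naive gluing would force $\liminf_n a_n$ to equal a $K$-value, which need not hold. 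Replacing $T$ by its shift-saturation $B$, checking that both $B$ and its complement are $\sigma$-invariant, and verifying that all shifts landing in $T$ carry a common $K$-value (by the invariance of $K$) closes this gap; your verification of (ii) on $T$ via $K(x,y)=K(x_n,y_n)\in[\min(x_n,y_n),\max(x_n,y_n)]$ is also the right argument, and is only implicit in the paper. So: same architecture, but your version repairs a real imprecision in the published converse.
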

\begin{proof}
 By the definition of mean we have, for all $n \ge 0$,
 \Eq{*}{
 \max(x_{n+1},y_{n+1}) \le \max(x_n,y_n).
 }
Thus the sequence $(\max(x_{n},y_{n}))_{n \in \N}$ is nondecreasing and 
 \Eq{*}{
 \limsup \:(x_0,y_0,x_1,y_1,x_2,y_2,\dots)
 &=\limsup_{n \to \infty} \max(x_n,y_n)\\
 &\le \max(x_0,y_0)=\max(x,y).
 }
Similarly we obtain $\liminf \:(x_0,y_0,x_1,y_1,x_2,y_2,\dots)\ge\min(x,y)$.

Now, as $\phi$ is between $\liminf$ and $\limsup$, we obtain that $\Bo_\phi$ is a mean. Moreover 
\Eq{*}{
\Bo_\phi(M(x,y),N(x,y))&=\phi\Big(M(x_0,y_0),N(x_0,y_0),\\
&\qquad\hspace{-4em} M\big(M(x_0,y_0),N(x_0,y_0)\big),
 N\big(M(x_0,y_0),N(x_0,y_0)\big),\dots\Big)\\
&=\phi(x_1,y_1,x_2,y_2,\dots)
=\phi(x_0,y_0,x_1,y_1,x_2,y_2,\dots)\\
&=\Bo_\phi(x_0,y_0)=\Bo_\phi(x,y),
}
which concludes the proof.

To prove the converse, for an be an arbitrary $(M,\,N)$-invariant mean $K$,
we define function $\phi$ on the orbit of $(x,\,y)$ by
\Eq{proper}{
\phi(x_0,y_0,x_1,y_1\dots):=K(x,\,y) &\qquad x,\,y \in I,
}
fulfilled by 
\Eq{fulfilled}{
\phi(a_1,a_2,a_3,a_4,\dots)=\liminf_{n \to \infty} a_n.
}

By the definition  of sequences $(x_n),\,(y_n)$ and elementary properties of $\liminf$ we obtain that $\phi$ satisfies (i). Moreover, in view of (i) and the easy-to-check inequality $\inf(a) \le \phi(a)\le \sup(a)$, the property (ii) is also valid.
\end{proof}

In two particular cases $\phi=\liminf$ and $\phi=\limsup$, as
\Eq{*}{
[\min(M(x,y),N(x,y)),\,\max(M(x,y),N(x,y))] \subset [x,\,y]
}
is valid for every $x,\,y \in I$ with $x <y$, we obtain two very important $(M,\,N)$-invariant means.
Define \emph{lower-} and \emph{upper-invariant means} $\Lo,\,\Up \colon I^2 \to I$ by
\Eq{*}{
\Lo(x,y)&:=\Bo_{\liminf}(x,y)=\lim_{n \to \infty} \min(x_n,y_n), \\
\Up(x,y)&:=\Bo_{\limsup}(x,y)=\lim_{n \to \infty} \max(x_n,y_n).
}

In fact $\Lo$ and $\Up$ are the smallest and the greatest $(M,\,N)$-invariant means, respectively, as every $(M,\,N)$-invariant mean is bounded from below by $\min(x_n,y_n)$ and from above by $\max(x_n,y_n)$ (for all $n \in \N$).

\subsection{\label{sec:TIM}Transfinite invariant mean}

 Transfinite invariant mean is the third (after lower- and upper-) natural invariant mean. In order to define it we assume comparability of means $M$ and $N$ --- more precisely $M(x,y) \le N(x,y)$ for all $x,\,y \in I$. Moreover we assume that the inequality \eq{weakIn} is valid.  

Let us consider two transfinite sequences\footnote{that is sequences which are enumerated by ordinal numbers; cf. Cantor \cite{Can55}.}
$(x_\alpha)$ and $(y_\alpha)$ by fulfilling convention \eq{E:xnyn} in the following way
\Eq{E:defxalyal}{
x_{\alpha}:=\lim_{\beta\nearrow\alpha} x_\beta, \qquad y_{\alpha}:=\lim_{\beta\nearrow\alpha} y_\beta \qquad\text{ for all limit ordinals }\alpha.
}
To provide the correctness of this definition we observe that $(x_\alpha)$ is nondecreasing while $(y_\alpha)$ is nonincreasing. Still, whenever $M$, $N$, $x$, and $y$ are given, these sequences are automatically provided.

Inequality $M \le N$ implies that $x_\alpha \le y_\alpha$ for every $\alpha>0$. In particular, by  the definition of $\Lo$ and $\Up$, we get 
\Eq{E:LoUpomega}{
\Lo(x,\,y)= x_{\omega}\quad\text{ and }\quad\Up(x,\,y)=y_{\omega}\,.
}
Thus 
\Eq{*}{
A_\alpha\colon I^2 \ni (x,y)\mapsto x_\alpha \in I \qquad \text{and}\qquad B_\alpha\colon I^2 \ni (x,y)\mapsto y_\alpha \in I
} 
are expressed as a function of $\Lo(x,\,y)$ and $\Up(x,\,y)$ for all $\alpha>\omega$. In particular they are all $(M,N)$-invariant. Moreover $A_\omega=\Lo$ and $B_\omega=\Up$.

The next lemma shows that iteration sequences $(A_\alpha)$ and $(B_\alpha)$ are eventually fixed. They reach that state after at most $\omega_1$ iterations ($\omega_1$ stands for the first uncountable ordinal). This imply that there is no point to consider indexes greater than $\omega_1$ as no new means are obtained.

\begin{lem}
 Let $M,\,N \colon I^2 \to I$ be two means having property \eq{weakIn} such that $M\le N$. Then $A_{\omega_1}(x,y)=B_{\omega_1}(x,y)$ for all $x,\,y \in I$.
\end{lem}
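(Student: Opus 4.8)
The plan is to control the gap $d_\alpha := y_\alpha - x_\alpha$ between the two transfinite sequences and to show that it is forced to vanish at some stage below $\omega_1$. Because $M \le N$ we have $x_\alpha \le y_\alpha$, and hence $d_\alpha \ge 0$, for every $\alpha \ge 1$, so I would work with the sequence $(d_\alpha)_{\alpha \ge 1}$. First I would record its two defining features. At a successor step, using $M \le N$ and \eqref{weakIn},
\[
d_{\alpha+1} = \abs{M(x_\alpha,y_\alpha)-N(x_\alpha,y_\alpha)} \le \abs{x_\alpha-y_\alpha} = d_\alpha ,
\]
the middle inequality being strict whenever $x_\alpha \ne y_\alpha$, that is whenever $d_\alpha > 0$. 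At a limit ordinal $\alpha$, since $x_\alpha$ and $y_\alpha$ are by \eqref{E:defxalyal} the one-sided limits of the (monotone, hence convergent) sequences $(x_\beta)$ and $(y_\beta)$, subtraction passes to the limit and $d_\alpha = \lim_{\beta \nearrow \alpha} d_\beta = \inf_{\beta<\alpha} d_\beta$. Thus $(d_\alpha)_{\alpha \ge 1}$ is a nonincreasing map into $[0,\infty)$ that strictly decreases at the successor of every index where it is still positive.

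The core of the proof is then a cardinality obstruction: such a map cannot remain positive all the way to $\omega_1$. Suppose for contradiction that $d_\alpha > 0$ for every $\alpha$ with $1 \le \alpha < \omega_1$. For each such $\alpha$ the open interval $J_\alpha := (d_{\alpha+1}, d_\alpha)$ is nonempty. If $\alpha < \beta < \omega_1$ then $\alpha + 1 \le \beta$, so monotonicity gives $d_\beta \le d_{\alpha+1}$, whence $\sup J_\beta \le \inf J_\alpha$ and the intervals $J_\alpha, J_\beta$ are disjoint. Picking a rational point in each $J_\alpha$ therefore defines an injection of the uncountable set $\{\alpha : 1 \le \alpha<\omega_1\}$ into $\Q$, which is impossible.

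Consequently there is a countable ordinal $\alpha_0 < \omega_1$ with $d_{\alpha_0} = 0$, i.e. $x_{\alpha_0} = y_{\alpha_0} =: t$. Since $M$ and $N$ are means we have $M(t,t) = N(t,t) = t$, so the successor rule freezes the common value and the limit rule preserves it; by transfinite induction $x_\alpha = y_\alpha = t$ for all $\alpha \ge \alpha_0$, and in particular $A_{\omega_1}(x,y) = x_{\omega_1} = y_{\omega_1} = B_{\omega_1}(x,y)$, as claimed.

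I expect the disjoint-intervals argument of the second paragraph --- essentially the statement that a strictly decreasing transfinite sequence of reals must be countable --- to be the one genuinely nonroutine step; the rest is bookkeeping on two monotone sequences. The point most in need of care is the treatment of limit ordinals, where one must use that the gap of the limits equals the limit of the gaps, so that $d_\alpha$ really is $\inf_{\beta<\alpha} d_\beta$ and the nonincreasing, strictly-decreasing structure survives passage to limits.
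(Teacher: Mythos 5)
Your proof is correct and follows essentially the same route as the paper's: control the gap $d_\alpha=y_\alpha-x_\alpha$, note it strictly decreases at successors while positive and is nonincreasing through limits, and conclude via the fact that a strictly decreasing transfinite sequence of nonnegative reals must be countable, after which reflexivity freezes the common value. The only difference is that you spell out the countability fact explicitly through the disjoint-rational-intervals argument, whereas the paper simply cites it as known.
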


\begin{proof}
We need to prove that $x_{\omega_1}=y_{\omega_1}$.
Inequality $M \le N$ implies $x_\alpha \le y_\alpha$ for all $\alpha \ge 1$. Moreover  \eq{weakIn} yields that for every $\alpha<\omega_1$ either $y_\alpha=x_\alpha$ (equivalently $y_\alpha-x_\alpha=0$) or $y_{\alpha+1}-x_{\alpha+1} < y_{\alpha}-x_{\alpha}$.

If $x_{\alpha_0}=y_{\alpha_0}$ for some $\alpha_0<\omega_1$ then by reflexivity of mean we obtain $x_{\alpha}=y_{\alpha}$ for all $\alpha \in [\alpha_0,\omega_1]$. In particular $x_{\omega_1}=y_{\omega_1}$.

From now on we may assume that $(y_\alpha-x_\alpha)_{\alpha<\omega_1}$ is strictly decreasing. As $x_\alpha \le y_\alpha$ we know know that this sequence consists of nonnegative entries only. This lead to a contradiction as every strictly decreasing sequence of nonnegative numbers is countable.
\end{proof}

\begin{rem}
As both $M$ and $N$ are means we obtain, applying the inequality $M \le N$, that the sequence
$(x_\alpha)_{\alpha \le \omega_1}$ is nondecreasing, while $(y_\alpha)_{\alpha \le \omega_1}$ is nonincreasing.
\end{rem}

Based on the lemma above we can define, for $M \le N$, a \emph{transfinite invariant mean} $\Tr \colon I^2 \to I$ by 
\Eq{def:Trdev}{
\Tr(x,y):=A_{\omega_1}(x,y)=B_{\omega_1}(x,y).
}

By the virtue of Lemma~\ref{lem:symme}, we can skip the comparability assumption whenever both means are symmetric (like it was already done in the case of $\Lo$ and $\Up$). 

Let us now present some important property of transfinite invariant mean.

\begin{thm} \label{thm:uniqcont}
Let $I$ be an interval, $M,\,N \colon I^2 \to I$ be means with $M\le N$ satisfying \eq{weakIn}. Either $\Tr$ is a unique continuous $(M,\,N)$-invariant mean or there are no continuous $(M,\,N)$-invariant means.
\end{thm}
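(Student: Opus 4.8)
The plan is to reduce the stated dichotomy to the single assertion that \emph{every} continuous $(M,N)$-invariant mean coincides with $\Tr$. Granting this, the conclusion follows at once: the mean $\Tr=A_{\omega_1}$ is itself $(M,N)$-invariant (as observed just before the statement, all the $A_\alpha$ with $\alpha>\omega$ are invariant), so if $\Tr$ happens to be continuous it is a continuous invariant mean, and by the assertion it is the only one; whereas if $\Tr$ is not continuous, no continuous invariant mean can exist at all, since any such mean would be forced to equal the discontinuous $\Tr$. Thus it suffices to prove the uniqueness assertion.

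To prove it, I would fix a continuous $(M,N)$-invariant mean $K$ and arbitrary arguments $x,y\in I$, and let $(x_\alpha)$, $(y_\alpha)$ be the associated transfinite sequences. I then show by transfinite induction that
\Eq{*}{
K(x_\alpha,y_\alpha)=K(x,y)\qquad\text{for every ordinal }\alpha.
}
The case $\alpha=0$ is trivial. For the successor step, the defining recursion $x_{\alpha+1}=M(x_\alpha,y_\alpha)$, $y_{\alpha+1}=N(x_\alpha,y_\alpha)$ together with the invariance of $K$ gives
\Eq{*}{
K(x_{\alpha+1},y_{\alpha+1})=K\big(M(x_\alpha,y_\alpha),N(x_\alpha,y_\alpha)\big)=K(x_\alpha,y_\alpha),
}
so the induction hypothesis propagates across successors.

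The only delicate point --- and the step I expect to be the main obstacle --- is the limit case, where continuity of $K$ must be pushed through a transfinite limit even though continuity is a sequential condition. Here I would exploit that every limit ordinal $\lambda<\omega_1$ is countable and hence of countable cofinality: choosing an increasing sequence $\beta_1<\beta_2<\cdots$ cofinal in $\lambda$, monotonicity of $(x_\alpha)$ and $(y_\alpha)$ yields $x_{\beta_n}\to x_\lambda$ and $y_{\beta_n}\to y_\lambda$, whence continuity of $K$ gives $K(x,y)=K(x_{\beta_n},y_{\beta_n})\to K(x_\lambda,y_\lambda)$, that is $K(x_\lambda,y_\lambda)=K(x,y)$. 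The genuinely problematic limit $\lambda=\omega_1$, whose cofinality is uncountable, never has to be treated: by the argument establishing the preceding lemma the sequences $(x_\alpha)$ and $(y_\alpha)$ stabilize already at some countable ordinal $\alpha_0<\omega_1$, with $x_{\alpha_0}=y_{\alpha_0}=\Tr(x,y)$. Evaluating the displayed identity at $\alpha=\alpha_0$ and using that a mean satisfies $K(t,t)=t$, I obtain $K(x,y)=K\big(\Tr(x,y),\Tr(x,y)\big)=\Tr(x,y)$. Since $x,y$ were arbitrary this proves $K=\Tr$, completing the reduction and hence the theorem.
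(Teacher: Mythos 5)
Your proposal is correct and follows essentially the same route as the paper: transfinite induction on $K(x_\alpha,y_\alpha)=K(x,y)$, with invariance handling successors, continuity handling limits, and reflexivity of $K$ together with $x_{\omega_1}=y_{\omega_1}$ finishing the argument. Your extra care at the limit step (using countable cofinality below $\omega_1$ and the stabilization of the sequences at a countable ordinal to avoid the $\omega_1$ case) is a welcome refinement of a point the paper passes over silently, but it is not a different proof.
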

\begin{proof}
Let $K$ be an arbitrary continuous $(M,\,N)$-invariant mean. We show that $K=\Tr$.

Fix $x,\,y \in I$. Using the definition of $\Tr$, it suffices to prove that $K(x,y)=x_{\omega_1}$. We will proof by transfinite induction that 
\Eq{E:TrIndas}{
K(x,y)=K(x_\alpha,y_\alpha)\quad\text{ for all }\quad\alpha\ge 0.
}

Indeed, as $K$ is $(M,\,N)$-invariant, we obtain 
\Eq{*}{
K(x_{\alpha+1},y_{\alpha+1})=K\big(M(x_\alpha,y_\alpha),N(x_\alpha,y_\alpha)\big)=K(x_\alpha,y_\alpha).
}
Furthermore, as $K$ is continuous, for every limit ordinal number $\alpha$, we get
\Eq{*}{
K(x_\alpha,\,y_\alpha )
=K\Big(\lim_{\beta \nearrow \alpha} x_\beta,\,\lim_{\beta \nearrow \alpha} y_\beta \Big)
=\lim_{\beta \nearrow \alpha} K( x_\beta,\,y_\beta).
}
Now \eq{E:TrIndas} easily follows. Finally, reflexivity of $K$ binded with equality $x_{\omega_1}=y_{\omega_1}$ concludes the proof.
\end{proof}
\begin{xrem}
By Lemma~\ref{lem:symme}, we can skip comparability assumption whenever both $M$ and $N$ are symmetric.
\end{xrem}

\section{Application and conclusions}

\subsection{Example of invariant property for noncontinuous means}
Fix an interval $I$ with $|I|>1$ and functions $M,\,N \colon I^2 \to I$ defined by
\Eq{*}{
M(x,y)&:=\begin{cases} \frac12(x+y) & \text{ for } \abs{x-y} \le 1, \\ \frac12 \big(x+y-\sqrt{\abs{x-y}}\:\big) & \text{ for }\abs{x-y} > 1, \end{cases}\qquad x,\,y \in I;\\
N(x,y)&:=\begin{cases} \frac12(x+y) & \text{ for } \abs{x-y} \le 1, \\ \frac12 \big(x+y+\sqrt{\abs{x-y}}\:\big) & \text{ for }\abs{x-y} > 1, \end{cases}\qquad x,\,y \in I.
}

It is easy to check that both $M$ and $N$ are symmetric and strict means on $I$. Furthermore the arithmetic mean is $(M,\,N)$-invariant. Whence, by Theorem~\ref{thm:uniqcont}, it is a transfinite invariant mean for this pair.

Let $(x_\alpha)$ and $(y_\alpha)$ are two transfinite sequences corresponding to the iteration $(M,\,N)$. Obviously, as $N \ge M$, we have $y_\alpha \ge x_\alpha$ for all $\alpha>0$. Thus, for all $\alpha\ge 0$,
\Eq{*}{
y_{\alpha+1}-x_{\alpha+1}=\begin{cases} 
0 & \text{ if }\abs{y_\alpha-x_\alpha}\le 1,\\
\sqrt{\abs{y_\alpha-x_\alpha}} & \text{ if }\abs{y_\alpha-x_\alpha}>1.
                                \end{cases}
}
However the iteration of square root is well known, so we obtain
\Eq{Ex1}{
y_{\omega}-x_{\omega}=\begin{cases} 0 & \text{ if }\abs{x-y}\le1, \\
                                 1 & \text{ if }\abs{x-y}> 1.
                                \end{cases}
}
On the other hand we can check by simple induction that 
\Eq{Ex2}{
x_\alpha+y_\alpha=x+y\quad \text{ for all }\alpha \ge 0.
}
We now bind \eq{E:LoUpomega}, \eq{Ex1}, and \eq{Ex2} for $\alpha=\omega$ to obtain
\Eq{*}{
\Lo(x,y)&=\begin{cases} \frac{x+y}2 & \text{ if }\abs{x-y}\le1, \\
                                 \frac{x+y-1}2 & \text{ if }\abs{x-y}> 1;
                                \end{cases} \\
\Up(x,y)&=\begin{cases} \frac{x+y}2 & \text{ if }\abs{x-y}\le1, \\
                                 \frac{x+y+1}2 & \text{ if }\abs{x-y}> 1.
                                \end{cases} 
}
To express it briefly, for every $c \in [-1,1]$, define the mean $K_c \colon I^2 \to I$ by
\Eq{*}{
K_c(x,y):=\begin{cases} \frac{x+y}2 & \abs{x-y} \le 1, \\ 
\frac{x+y+c}2 & \abs{x-y} > 1.
\end{cases}
}
Having this new notation we can simply write $\Lo=K_{-1}$, $\Up=K_1$, and $\Tr=K_0$.

If we now continue inductive steps we get $A_{\omega+1}=B_{\omega+1}=K_0=\Tr$. Thus  (in this example) sequences $(A_\alpha)_{\alpha\ge \omega}$ and $(B_\alpha)_{\alpha\ge \omega}$ contain the lower-, upper-, and transfinite- invariant means only. 

On the other hand every convex combination of invariant means is again an invariant mean. Thus $K_c$ is $(M,\,N)$-invariant for all $c \in [-1,1]$. This shows that not every $(M,\,N)$-invariant mean is obtained in sequences $(A_\alpha)$, $(B_\alpha)$.

\subsection{Application to functional equations} There appear a natural problem: which results known for continuous means can be adapted to the discontinuous setting? 

In this section we are going to prove just a single result inspired by Matkowski \cite[Theorem 4]{Mat06b}. 
\begin{prop}
 Let $M,\,N \colon I^2 \to I$ be two means with $M \le N$, having property \eq{weakIn}, and $\Phi \colon I^2 \to \R$ be a continuous function. Then
\Eq{Mat06bAs}{
 \Phi(x,y)=\Phi(M(x,y),N(x,y)) \qquad \text{for all }x,\,y \in I
 }
if and only if there exists a continuous function $f \colon I \to \R$ such that  
 \Eq{*}{
\Phi=f \circ \Tr.
}
Moreover if $x \mapsto \Phi(x,x)$ is an injective function then $\Tr$ is continuous.
\end{prop}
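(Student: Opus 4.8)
The plan is to reuse the transfinite-induction machinery of Theorem~\ref{thm:uniqcont} almost verbatim, the only genuinely new observation being that $\Phi$ need not itself be a mean. For the \emph{if} direction one simply invokes that $\Tr$ is $(M,N)$-invariant: writing $\Phi=f\circ\Tr$ gives $\Phi(M(x,y),N(x,y))=f(\Tr(M(x,y),N(x,y)))=f(\Tr(x,y))=\Phi(x,y)$, so \eq{Mat06bAs} holds.

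For the \emph{only if} direction I would fix $x,y\in I$ and prove by transfinite induction that $\Phi(x,y)=\Phi(x_\alpha,y_\alpha)$ for every ordinal $\alpha\le\omega_1$. The successor step uses the functional equation \eq{Mat06bAs} directly, exactly as in Theorem~\ref{thm:uniqcont}, while the limit step uses continuity of $\Phi$ together with the limit convention \eq{E:defxalyal}: if $\Phi(x_\beta,y_\beta)=\Phi(x,y)$ for all $\beta<\alpha$, then passing to the limit inside $\Phi$ preserves the value. Evaluating at $\alpha=\omega_1$ and recalling $x_{\omega_1}=y_{\omega_1}=\Tr(x,y)$ from \eq{def:Trdev} yields $\Phi(x,y)=\Phi(\Tr(x,y),\Tr(x,y))$. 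Hence the function $f\colon I\to\R$ defined by $f(t):=\Phi(t,t)$, which is continuous as the restriction of $\Phi$ to the diagonal, satisfies $\Phi=f\circ\Tr$.

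For the final claim, suppose the diagonal map $t\mapsto\Phi(t,t)=f(t)$ is injective. A continuous injective function on an interval is strictly monotone, hence a homeomorphism of $I$ onto $f(I)\subset\R$; in particular $f^{-1}\colon f(I)\to I$ is continuous. Since $\Phi(x,y)=f(\Tr(x,y))\in f(I)$, I can solve for $\Tr=f^{-1}\circ\Phi$, and continuity of $\Tr$ then follows as a composition of the continuous maps $\Phi$ and $f^{-1}$.

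The argument is essentially mechanical once the parallel with Theorem~\ref{thm:uniqcont} is recognized; the only place demanding care is the limit step of the transfinite induction, where one must correctly invoke both continuity of $\Phi$ and the convention \eq{E:defxalyal} so that the single value $\Phi(x,y)$ propagates undisturbed through all limit ordinals up to $\omega_1$. The moreover part is then a short topological remark: injectivity upgrades the continuous $f$ to a homeomorphism onto its image, which is exactly what lets one recover $\Tr$ from the continuous data $\Phi$ and $f^{-1}$.
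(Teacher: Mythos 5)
Your proposal is correct and follows essentially the same route as the paper: transfinite induction propagating $\Phi(x,y)=\Phi(x_\alpha,y_\alpha)$ up to $\omega_1$ for the forward direction, $(M,N)$-invariance of $\Tr$ for the converse, and $\Tr=f^{-1}\circ\Phi$ for the moreover part. Your only addition is the explicit justification that a continuous injection on an interval has a continuous inverse, which the paper leaves implicit.
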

Recall that, like in many other results, comparability may be replaced by symmetry.
\begin{proof}
Take $x,\,y \in I$ arbitrarily. Using \eq{E:defxalyal}, equality \eq{Mat06bAs} can be rewritten as
$\Phi(x_\alpha,y_\alpha)=\Phi(x_{\alpha+1},y_{\alpha+1})$ for every $\alpha$.

By continuity of $\Phi$ we may extend the inductive proof to limit ordinals and 
obtain $\Phi(x_\alpha,y_\alpha)=\Phi(x_0,y_0)$ for every $\alpha$. If we put $\alpha=\omega_1$, by \eq{def:Trdev}, we obtain
\Eq{E:fTrpr}{
\Phi(\Tr(x,y),\Tr(x,y))=\Phi(x,y).
}
To complete the first implication we can simply define $f(x):=\Phi(x,x)$.

The converse implication is immediate in view of $(M,N)$-invariance of $\Tr$.

Additionally, if $x \mapsto \Phi(x,x)$ is injective, then so is $f$. In particular $f^{-1}$ exists and it is a continuous function. 

Consequently $\Tr=f^{-1} \circ \Phi$ is continuous, too.
\end{proof}

\subsection{Conclusions}
In this paper we discussed some invariant means which naturally emerged in a case of two noncontinuous means which are either comparable or both symmetric
(sometimes additionally satisfying condition \eq{weakIn}\:). 

There appear some natural problems concerning this new aspect. For example: 
{\it (i)} find out the 'noncontinuous counterpart' of results which are stated for continuous means, 
{\it (ii)} find out some additional assumption(s) to invariant mean which can be made in order to obtain the uniqueness of the solution (we presented three of those: minimality, maximality, and continuity),
{\it (iii)} generalize this concept to multivariable means (it is relatively natural in case of $\Lo$ and $\Up$ only).

Some progress toward (i) and (ii) was presented while the third aspect is  outside the scope of the present paper.
\def\cprime{$'$} \def\R{\mathbb R} \def\Z{\mathbb Z} \def\Q{\mathbb Q}
  \def\C{\mathbb C}


\begin{thebibliography}{10}
\bibitem{BajPal09b}
Sz. Baják and Zs. Páles.
\newblock {Computer aided solution of the invariance equation for two-variable
  {G}ini means}.
\newblock {\em Comput. Math. Appl.}, 58:334–340, 2009.

\bibitem{BajPal09a}
Sz. Baják and Zs. Páles.
\newblock {Invariance equation for generalized quasi-arithmetic means}.
\newblock {\em Aequationes Math.}, 77:133–145, 2009.

\bibitem{BajPal10}
Sz. Baják and Zs. Páles.
\newblock {Computer aided solution of the invariance equation for two-variable
  {S}tolarsky means}.
\newblock {\em Appl. Math. Comput.}, 216(11):3219–3227, 2010.

\bibitem{BajPal13}
Sz. Baják and Zs. Páles.
\newblock {Solving invariance equations involving homogeneous means with the
  help of computer}.
\newblock {\em Appl. Math. Comput.}, 219(11):6297–6315, 2013.

\bibitem{BorBor87}
J.~M. Borwein and P.~B. Borwein.
\newblock {\em {Pi and the AGM: A Study in the Analytic Number Theory and
  Computational Complexity}}.
\newblock Wiley-Interscience, New York, NY, USA, 1987.

\bibitem{Can55}
G.~Cantor.
\newblock {\em {Contributions to the Founding of the Theory of Transfinite
  Numbers}}.
\newblock {Dover Books on Mathematics}. Dover Publications, 1955.

\bibitem{Con90}
J.~B. Conway.
\newblock {\em {A Course in Functional Analysis}}, volume~96 of {\em {Graduate
  Texts in Mathematics}}.
\newblock Springer-Verlag, New York, 2nd ed edition, 1990.

\bibitem{Dar05a}
Z.~Daróczy.
\newblock {Functional equations involving means and {G}auss compositions of
  means}.
\newblock {\em Nonlinear Anal.}, 63(5-7):e417–e425, 2005.

\bibitem{DarPal02c}
Z.~Daróczy and Zs. Páles.
\newblock {Gauss-composition of means and the solution of the
  {M}atkowski–{S}utô problem}.
\newblock {\em Publ. Math. Debrecen}, 61(1-2):157–218, 2002.

\bibitem{DarPal03a}
Z.~Daróczy and Zs. Páles.
\newblock {The {M}atkowski–{S}utô problem for weighted quasi-arithmetic
  means}.
\newblock {\em Acta Math. Hungar.}, 100(3):237–243, 2003.

\bibitem{Gau18}
C.~F. Gauss.
\newblock {Nachlass: Aritmetisch-geometrisches Mittel}.
\newblock In {\em {Werke 3 (Göttingem 1876)}}, page 357–402. Königliche
  Gesellschaft der Wissenschaften, 1818.

\bibitem{Gla11b}
D.~Głazowska.
\newblock {A solution of an open problem concerning {L}agrangian mean-type
  mappings}.
\newblock {\em Cent. Eur. J. Math.}, 9(5):1067–1073, 2011.

\bibitem{Gla11a}
D.~Głazowska.
\newblock {Some {C}auchy mean-type mappings for which the geometric mean is
  invariant}.
\newblock {\em J. Math. Anal. Appl.}, 375(2):418–430, 2011.

\bibitem{Mat99b}
J.~Matkowski.
\newblock {Iterations of mean-type mappings and invariant means}.
\newblock {\em Ann. Math. Sil.}, (13):211–226, 1999.
\newblock European Conference on Iteration Theory (Muszyna-Złockie, 1998).

\bibitem{Mat02b}
J.~Matkowski.
\newblock {On iteration semigroups of mean-type mappings and invariant means}.
\newblock {\em Aequationes Math.}, 64(3):297–303, 2002.

\bibitem{Mat05}
J.~Matkowski.
\newblock {Lagrangian mean-type mappings for which the arithmetic mean is
  invariant}.
\newblock {\em J. Math. Anal. Appl.}, 309(1):15–24, 2005.

\bibitem{Mat06b}
J.~Matkowski.
\newblock {On iterations of means and functional equations}.
\newblock In {\em {Iteration theory (ECIT '04)}}, volume 350 of {\em {Grazer
  Math. Ber.}}, page 184–201. Karl-Franzens-Univ. Graz, Graz, 2006.

\bibitem{Mat13}
J.~Matkowski.
\newblock {Iterations of the mean-type mappings and uniqueness of invariant
  means}.
\newblock {\em Annales Univ. Sci. Budapest., Sect. Comp.}, 41:145–158, 2013.

\bibitem{MatPal15}
J.~Matkowski and Zs. Páles.
\newblock {Characterization of generalized quasi-arithmetic means}.
\newblock {\em Acta Sci. Math. (Szeged)}, 81(3–4):447–456, 2015.

\bibitem{Pas15c}
P.~Pasteczka.
\newblock {On negative results concerning {H}ardy means}.
\newblock {\em Acta Math. Hungar.}, 146(1):98–106, 2015.

\bibitem{Pas16a}
P.~Pasteczka.
\newblock {Iterated quasi-arithmetic mean type mappings}.
\newblock {\em Colloq. Math.}, 144(2):215–228, 2016.

\bibitem{Pas1801}
P.~Pasteczka.
\newblock On the quasi-arithmetic {G}auss-type iteration.
\newblock {\em Aequationes Math.}, 2018.

\end{thebibliography}
\end{document}